\documentclass[12pt]{amsart}

\usepackage{mathpazo}

\theoremstyle{plain}
\newtheorem{theorem}{Theorem}

\newtheorem{lemma}[theorem]{Lemma}
\newtheorem{corollary}[theorem]{Corollary}

\theoremstyle{definition}

\theoremstyle{remark}

\makeatletter
\newcommand{\opnorm}{\@ifstar\@opnorms\@opnorm}
\newcommand{\@opnorms}[1]{%
  \left|\mkern-1.5mu\left|\mkern-1.5mu\left|
   #1
  \right|\mkern-1.5mu\right|\mkern-1.5mu\right|
}
\newcommand{\@opnorm}[2][]{%
  \mathopen{#1|\mkern-1.5mu#1|\mkern-1.5mu#1|}
  #2
  \mathclose{#1|\mkern-1.5mu#1|\mkern-1.5mu#1|}
}
\makeatother

\begin{document}

\title[Amalgamations of Banach spaces]{Zippin's embedding theorem and amalgamations of classes of Banach spaces}
\author{Ond\v{r}ej Kurka}
\thanks{The research was supported by the grant GA\v{C}R 14-04892P. The author is a junior researcher in the University Centre for Mathematical Modelling, Applied Analysis and Computational Mathematics (MathMAC). The author is a member of the Ne\v{c}as Center for Mathematical Modeling.}
\address{Department of Mathematical Analysis, Charles University, Soko\-lovsk\'a 83, 186 75 Prague 8, Czech Republic}
\email{kurka.ondrej@seznam.cz}
\keywords{Zippin's theorem, Isometric embedding, Effros-Borel structure, Analytic set, Separable dual, Reflexivity}
\subjclass[2010]{Primary 46B04, 54H05; Secondary 46B10, 46B15, 46B70}
\begin{abstract}
It was proved by Dodos and Ferenczi that the classes of Banach spaces with a separable dual and of separable reflexive Banach spaces are strongly bounded. In this note, we provide an isometric version of this result.
\end{abstract}
\maketitle

A class $ \mathcal{P} $ of separable Banach spaces is called \emph{strongly bounded} if, for every analytic subset $ \mathcal{C} $ of $ \mathcal{P} $ in the Effros-Borel structure of subspaces of $ C(2^{\mathbb{N}}) $ (defined below), there exists $ E \in \mathcal{P} $ that contains an isomorphic copy of every $ X \in \mathcal{C} $. This concept was introduced by S.~A.~Argyros and P.~Dodos \cite{argyrosdodos} who proved that several classes, namely the class of spaces with a shrinking basis and the class of reflexive spaces with a basis, are strongly bounded.

Let $ X $ be a Banach space with a separable dual. Zippin's theorem \cite{zippin} states that $ X $ embeds isomorphically into a Banach space with a shrinking basis. Moreover, if $ X $ is reflexive, then it embeds into a reflexive Banach space with a basis. The theorem was revisited by N.~Ghoussoub, B.~Maurey and W.~Schachermayer \cite{gms}. Following their approach and an ordinal version by B.~Bossard \cite{bossard1}, the reliance on a basis was dropped by P.~Dodos and V.~Ferenczi \cite{dodosferenczi} who proved that the classes of Banach spaces with a separable dual and of separable reflexive Banach spaces are strongly bounded.

G.~Godefroy asked in \cite{godefroy} for an isometric version of the amalgamation theory of S.~A.~Argyros and P.~Dodos \cite{argyrosdodos}. Recently, the author \cite{kurka} has introduced an isometric variant of their technique. However, the technique from \cite{kurka} is applicable only on spaces with a monotone basis, so the problem of the reliance on a basis appears again. For the two considered classes, it turns out that the problem can be solved by the same method as in the isomorphic setting. We obtain the following extension of the result from \cite{dodosferenczi}.

\begin{theorem} \label{thmmain}
{\rm (1)} Let $ \mathcal{C} $ be an analytic set of Banach spaces with a separable dual. Then there exists a Banach space $ E $ with a shrinking monotone basis which contains an isometric copy of every member of $ \mathcal{C} $.

{\rm (2)} Let $ \mathcal{C} $ be an analytic set of separable reflexive Banach spaces. Then there exists a reflexive Banach space $ E $ with a monotone basis which contains an isometric copy of every member of $ \mathcal{C} $.
\end{theorem}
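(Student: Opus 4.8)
The plan is to reduce Theorem~\ref{thmmain} to the isometric amalgamation result for spaces with a monotone basis from \cite{kurka}, in exactly the way Dodos and Ferenczi \cite{dodosferenczi} reduced the strong boundedness of these two classes to the amalgamation theorem of Argyros and Dodos \cite{argyrosdodos}. The bridge between the two settings is a parametrized \emph{isometric} version of Zippin's theorem \cite{zippin}, together with the elementary fact that a Borel image of an analytic set is analytic.

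\emph{Step 1: a parametrized isometric Zippin theorem.} I would first produce a Borel map $ \Phi $, defined on the Effros--Borel space of subspaces of $ C(2^{\mathbb N}) $ (or on a Borel set containing $ \mathcal C $), which assigns to every space $ X $ with separable dual a code $ \Phi(X) $ of a Banach space $ Z_X $ equipped with a shrinking monotone Schauder basis such that $ X $ embeds \emph{isometrically} into $ Z_X $, and such that, when $ X $ is moreover reflexive, $ Z_X $ is reflexive. The construction follows the interpolation scheme of Ghoussoub--Maurey--Schachermayer \cite{gms} and its ordinal refinement by Bossard \cite{bossard1} used in \cite{dodosferenczi}, with two modifications. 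First, the interpolated norm must be arranged so that it restricts to the original norm of $ X $, so that the embedding is isometric rather than merely $ (1+\varepsilon) $-isomorphic; this forces a careful choice of the seminorms entering the interpolation so that they are dominated by, and asymptotically recover, $ \|\cdot\|_X $. Second, every choice feeding the construction---a dense sequence in $ X $, a weak$^*$-dense sequence in $ B_{X^*} $, the increasing sequence of finite-dimensional subspaces driving the interpolation---must be selected Borel-measurably in the parameter $ X $, using the Kuratowski--Ryll-Nardzewski selection theorem and the standard Borel parametrization of duals. Finally, the resulting shrinking monotone finite-dimensional decomposition is refined to a shrinking monotone basis by equipping each finite-dimensional summand with a monotone basis (every finite-dimensional space admits one) and concatenating; this last step is again Borel in the code.

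\emph{Step 2: amalgamation.} Set $ \mathcal D = \Phi(\mathcal C) $. Since $ \Phi $ is Borel and $ \mathcal C $ is analytic, $ \mathcal D $ is an analytic set of Banach spaces with a shrinking monotone basis in case (1), and an analytic set of reflexive Banach spaces with a monotone basis in case (2). Applying the isometric amalgamation theorem of \cite{kurka} to $ \mathcal D $ yields a Banach space $ E $ with a monotone basis containing an isometric copy of every member of $ \mathcal D $, hence, via $ X \hookrightarrow Z_X $, an isometric copy of every member of $ \mathcal C $. It remains to verify that $ E $ has a shrinking basis in case (1) and is reflexive in case (2). This is the isometric counterpart of the preservation properties of the Argyros--Dodos amalgamation \cite{argyrosdodos}, and I expect it to follow by the same argument, tracking the shrinking property (resp.\ reflexivity) of the summands $ Z_X $ through the explicit construction of the amalgam $ E $.

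\emph{Main obstacle.} The crux is Step~1. The classical proofs of Zippin's theorem produce only $ (1+\varepsilon) $-embeddings, which would give an almost-isometrically universal space rather than an isometrically universal one; upgrading the interpolation construction to yield an exactly isometric embedding, while simultaneously keeping the entire assignment $ X \mapsto Z_X $ Borel-measurable, is the delicate point. The reflexive case carries the additional burden of propagating reflexivity both through the interpolation in Step~1 and through the amalgamation in Step~2, so that the final space $ E $ is genuinely reflexive.
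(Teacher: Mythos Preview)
Your two-step plan---embed each $X$ isometrically into a space $Z_X$ with a shrinking monotone basis via a parametrized Zippin-type construction, then feed the resulting analytic family into the amalgamation theorem of \cite{kurka}---is exactly the paper's strategy. But you have misidentified the difficulty in Step~1. No modification of the Ghoussoub--Maurey--Schachermayer interpolation is needed to make the embedding isometric: the decisive observation is Lemma~\ref{interpembed}, which says that whenever $B_E\subset W\subset B_X$, the Davis--Figiel--Johnson--Pe\l czy\'nski norm $\opnorm{\cdot}$ restricts to a \emph{constant multiple} of $\Vert\cdot\Vert$ on $E$ (indeed $\Vert x\Vert_n=(2^n+2^{-n})^{-1}\Vert x\Vert$ for $x\in E$). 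Since the GMS set $W_E=\overline{\mathrm{co}}\,\bigcup_k P^E_k(B_E)$ satisfies $B_E\subset W_E\subset B_{C(2^{\mathbb N})}$ automatically, the \emph{unmodified} space $Z(E)$ already contains an isometric copy of $E$. What you call the ``crux'' and ``delicate point'' is thus a one-line computation, not a redesign of the interpolation.

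The remaining items you flag as needing work are likewise already available off the shelf. The GMS construction produces a monotone shrinking \emph{basis} of $Z(E)$ directly (property~(ii) in the paper), so no FDD-to-basis refinement is required; reflexivity of $Z(E)$ when $E$ is reflexive is property~(iii); and Bossard's analyticity result (property~(iv)) gives that $\{(X,Y):Y\textrm{ is isometric to }Z(E_X)\}$ is analytic, so you only need to project this set---there is no need to manufacture a Borel \emph{map} $\Phi$ via Kuratowski--Ryll-Nardzewski selections. Finally, in Step~2 the preservation of the shrinking property and of reflexivity under amalgamation is part of the statement of \cite[Theorem~1.2]{kurka}, not something to be re-verified. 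In short, once Lemma~\ref{interpembed} is noticed, the proof assembles from existing pieces with essentially no new technical work.
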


We include two consequences of this theorem which are nothing else than isometric versions of two corollaries from \cite{dodosferenczi}. The first of them provides a universal space for a class of spaces on which the Szlenk index is bounded.

\begin{corollary} \label{corszlenk}
For every $ \xi < \omega_{1} $, there exists a Banach space $ E_{\xi} $ with a separable dual which contains an isometric copy of every separable Banach space with the Szlenk index less than or equal to $ \xi $.
\end{corollary}

E.~Odell and Th.~Schlumprecht \cite{odschl} proved, answering a question of J.~Bourgain \cite{bourgain}, that there exists a separable reflexive Banach space which contains an isomorphic copy of every separable super-reflexive Banach space. (The same result was proposed also in \cite{tokarev}). We show that this result holds in a stronger sense.

\begin{corollary} \label{corsuprefl}
There exists a separable reflexive Banach space which contains an isometric copy of every separable super-reflexive Banach space.
\end{corollary}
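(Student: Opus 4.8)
The plan is to apply Theorem~\ref{thmmain}(2) to the class $\mathcal{SR}$ of all separable super-reflexive Banach spaces, viewed as a subset of the Effros--Borel structure of subspaces of $C(2^{\mathbb{N}})$. Since every super-reflexive space is reflexive, $\mathcal{SR}$ consists of separable reflexive spaces, so the only hypothesis of Theorem~\ref{thmmain}(2) that is not immediate is that $\mathcal{SR}$ is \emph{analytic}. Granting that, the theorem furnishes a reflexive Banach space $E$ with a monotone basis --- in particular a separable reflexive space --- which contains an isometric copy of every member of $\mathcal{SR}$, i.e. of every separable super-reflexive Banach space; this is exactly the assertion. Thus the entire content of the proof is reduced to showing that $\mathcal{SR}$ is analytic.

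For that I would invoke Pisier's renorming theorem: a Banach space is super-reflexive if and only if it admits an equivalent norm whose modulus of convexity is of power type $p$ for some $p\in[2,\infty)$, and any such (normalized) renorming has some finite equivalence constant $C$, which we may take rational. Writing $d_0(X),d_1(X),\dots$ for a Borel selection of a sequence dense in $X$, one can code an equivalent renorming of $X$ with equivalence constant at most $C$ by the sequence of its values $(a_k)_k$ on the $d_k(X)$, which ranges over a fixed Polish space. The requirements that such a sequence cohere to a norm with equivalence constant at most $C$ and with modulus of convexity at least $c\varepsilon^p$ translate, via the density of the $d_k(X)$ and continuity, into Borel conditions on the pair $(X,(a_k)_k)$. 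Hence, for each triple of rationals $p\ge 2$, $c>0$, $C<\infty$, the set of spaces admitting such a renorming is the projection of a Borel set along a Polish fibre, so it is analytic, and $\mathcal{SR}$ is the countable union of these sets over all such triples, so it is analytic as well. (One could add, as a remark, that James's quantitative characterization of super-reflexivity exhibits $\mathcal{SR}$ as co-analytic, so $\mathcal{SR}$ is in fact Borel; but only analyticity is needed.)

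I expect the only genuinely delicate part to be this descriptive set-theoretic bookkeeping --- fixing a convenient coding of renormings and verifying carefully that ``the modulus of convexity is of power type $p$ with constant $c$'' is a Borel condition in that coding, since the inequalities defining the modulus of convexity refer to the unit ball of the renormed space and must be handled through the dense sequence, so the interplay of strict and non-strict inequalities needs a little care. Everything else is a direct appeal to Theorem~\ref{thmmain}(2). It is worth noting the contrast with the isomorphic statement of Odell and Schlumprecht reproved by Dodos and Ferenczi \cite{dodosferenczi}: there Pisier's theorem is used to replace each super-reflexive space by an \emph{isomorphic} one lying in a fixed Borel class of uniformly convex spaces, whereas here --- where renorming the space is not allowed --- Pisier's theorem serves only to see that the class $\mathcal{SR}$ is itself analytic, after which the isometric amalgamation of Theorem~\ref{thmmain}(2) does the rest.
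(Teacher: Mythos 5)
Your proposal is correct and its top-level structure coincides with the paper's: reduce everything to showing that the class $\mathcal{SR}$ of separable super-reflexive subspaces of $C(2^{\mathbb{N}})$ is analytic, and then apply Theorem~\ref{thmmain}(2) directly to $\mathcal{SR}$, so that the super-reflexive spaces themselves (not renormings of them) embed isometrically. Where you differ is in how analyticity is obtained. The paper gets it by citation in three short steps: the set of uniformly convex subspaces of $C(2^{\mathbb{N}})$ is Borel (Dodos--Ferenczi, Corollary~5), the isomorphism relation is analytic in $\mathcal{SE}(C(2^{\mathbb{N}}))^{2}$ (Bossard), hence the isomorphism-saturation of the uniformly convex spaces is analytic, and by Enflo's theorem this saturation is exactly $\mathcal{SR}$. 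You instead reprove the analyticity by hand: you invoke Pisier's power-type renorming theorem and code an equivalent norm by its values on a Borel-selected dense sequence, checking that ``equivalence constant at most $C$'' and ``modulus of convexity at least $c\varepsilon^{p}$'' are Borel conditions in that coding, and then take a countable union over rational triples $(p,c,C)$. This works: the coding of renormings via dense sequences is exactly the kind of argument underlying both of the cited facts, and the strict/non-strict inequality issue you flag is manageable because you quantify over all rational $\varepsilon$. The trade-off is that your route is self-contained but requires carrying out this bookkeeping in detail (and uses Pisier where Enflo's qualitative statement would suffice, the power-type bound merely making the quantification cleaner), whereas the paper's route is a three-line deduction from known Borel/analyticity results, at the price of passing through the analytic isomorphism relation rather than coding renormings internally. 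Your closing remark about the contrast with the isomorphic argument of Odell--Schlumprecht/Dodos--Ferenczi --- that the renorming theorem is used only to identify $\mathcal{SR}$ as an analytic class, not to replace spaces by isomorphic uniformly convex ones --- captures precisely the point of the isometric strengthening.
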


In particular, the space contains an isometric copy of every finite-dimensional space. Let us note that a result of A.~Szankowski \cite{szankowski} (cf.~\cite[remark (II)]{kurka}) states that there exists a separable reflexive Banach space in which every finite-dimensional space has an isometric copy that is $ 1 $-complemented.

Let us introduce some definitions and notation. By a basis we mean a Schauder basis. A basis $ x_{1}, x_{2}, \dots $ is said to be \emph{monotone} if the associated partial sum operators $ P_{n} : \sum_{k=1}^{\infty} a_{k}x_{k} \mapsto \sum_{k=1}^{n} a_{k}x_{k} $ satisfy $ \Vert P_{n} \Vert \leq 1 $.

A \emph{Polish space (topology)} means a separable completely metrizable space (topology). A set $ P $ equipped with a $ \sigma $-algebra is called a \emph{standard Borel space} if the $ \sigma $-algebra is generated by a Polish topology on $ P $. A subset of a standard Borel space is called \emph{analytic} if it is a Borel image of a Polish space. The complement of an analytic set is called \emph{coanalytic}.

For a topological space $ X $, the set $ \mathcal{F}(X) $ of all closed subsets of $ X $ is equipped with the \emph{Effros-Borel structure}, defined as the $ \sigma $-algebra generated by the sets
$$ \{ F \in \mathcal{F}(X) : F \cap U \neq \emptyset \} $$
where $ U $ varies over open subsets of $ X $. If $ X $ is Polish, then, equipped with this $ \sigma $-algebra, $ \mathcal{F}(X) $ forms a standard Borel space.

By the \emph{standard Borel space of separable Banach spaces} we mean
$$ \mathcal{SE}(C(2^{\mathbb{N}})) = \big\{ F \in \mathcal{F}(C(2^{\mathbb{N}})) : \textrm{$ F $ is linear} \big\}, $$
considered as a subspace of $ \mathcal{F}(C(2^{\mathbb{N}})) $.

The interpolation scheme of Davis, Figiel, Johnson and Pe\l czy\'nski \cite{dfjp} is defined as follows. Let $ W $ be a bounded, closed, convex and symmetric subset of a Banach space $ X $. For each $ n \in \mathbb{N} $, let $ \Vert \cdot \Vert_{n} $ be the equivalent norm given by
$$ B_{(X, \Vert \cdot \Vert_{n})} = \overline{2^{n}W + 2^{-n}B_{X}}. $$
The \emph{$ 2 $-interpolation space of the pair $ (X, W) $} is defined as the space $ (Y, \opnorm{\cdot}) $ where
$$ \opnorm{x} = \Big( \sum_{n=1}^{\infty} \Vert x \Vert_{n}^{2} \Big)^{1/2}, \quad x \in X, $$
and
$$ Y = \{ x \in X : \opnorm{x} < \infty \}. $$

Now, let us prove the above results. Actually, the only remaining ingredient is the following observation.

\begin{lemma} \label{interpembed}
Let $ E $ be a subspace of $ X $ such that $ B_{E} \subset W \subset B_{X} $. Then there is a constant $ c > 0 $ such that
$$ \opnorm{x} = c \Vert x \Vert, \quad x \in E. $$
In particular, $ Y $ contains an isometric copy of $ E $.
\end{lemma}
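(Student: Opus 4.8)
The plan is to compute the norms $\Vert \cdot \Vert_n$ restricted to $E$ explicitly using the hypothesis $B_E \subset W \subset B_X$. First I would observe that, for $x \in E$, the inclusion $B_E \subset W$ gives that $x/\Vert x\Vert \in W$, hence $2^n x/\Vert x\Vert \in 2^n W \subset 2^n W + 2^{-n}B_X$, which shows $\Vert x \Vert_n \leq 2^{-n}\Vert x\Vert$. For the reverse inequality, I would use $W \subset B_X$: the set $2^n W + 2^{-n} B_X$ is contained in $(2^n + 2^{-n})B_X$, and since the unit ball $B_{(X,\Vert\cdot\Vert_n)}$ is the closure of a set contained in $(2^n+2^{-n})B_X$, it is itself contained in $(2^n+2^{-n})B_X$; therefore $\Vert y \Vert \leq (2^n + 2^{-n})\Vert y\Vert_n$ for all $y$, and in particular $\Vert x\Vert_n \geq \Vert x\Vert/(2^n+2^{-n})$ for $x \in E$. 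Wait—this only yields two-sided estimates, not an exact identity, so I need a sharper argument on $E$.

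The correct approach is to note that $\Vert\cdot\Vert_n$ restricted to $E$ is proportional to $\Vert\cdot\Vert$. The key point is that $B_E$ is exactly the unit ball of the subspace $E$ in the norm inherited from $X$, and the construction of $\Vert\cdot\Vert_n$ is homogeneous: writing $B_{(X,\Vert\cdot\Vert_n)} = \overline{2^n W + 2^{-n} B_X}$, I would show that this set intersected with $E$ equals $\lambda_n B_E$ for some constant $\lambda_n > 0$. Indeed, since $B_E \subset W$ and $B_E \subset B_X$, we get $(2^n + 2^{-n})B_E \subset 2^n W + 2^{-n}B_X$, so $\lambda_n \geq 2^n + 2^{-n}$; conversely, since $W \subset B_X$ we have $2^n W + 2^{-n} B_X \subset (2^n + 2^{-n})B_X$, and intersecting with $E$ and using that $E \cap B_X = B_E$ gives $\lambda_n \leq 2^n + 2^{-n}$. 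Hence $\lambda_n = 2^n + 2^{-n}$ and, on $E$, $\Vert x\Vert_n = \Vert x\Vert/(2^n + 2^{-n})$.

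Finally I would substitute into the definition of $\opnorm{\cdot}$: for $x \in E$,
$$ \opnorm{x}^2 = \sum_{n=1}^\infty \Vert x\Vert_n^2 = \Big( \sum_{n=1}^\infty \frac{1}{(2^n + 2^{-n})^2} \Big) \Vert x\Vert^2, $$
so the claimed constant is $c = \big(\sum_{n=1}^\infty (2^n + 2^{-n})^{-2}\big)^{1/2}$, which is a finite positive number since the series is dominated by $\sum 2^{-2n}$. This shows $\opnorm{x} = c\Vert x\Vert$ for all $x \in E$; in particular $E \subset Y$ and the inclusion $E \hookrightarrow Y$ scaled by $1/c$ is an isometry, giving the final claim.

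I expect the main subtlety to be the identity $E \cap \overline{2^n W + 2^{-n} B_X} = (2^n + 2^{-n}) B_E$, specifically handling the closure: one must check that taking the closure in $X$ does not enlarge the intersection with $E$ beyond $(2^n+2^{-n})B_E$. This follows because the closure is still contained in the closed set $(2^n + 2^{-n})B_X$, and $E \cap (2^n+2^{-n})B_X = (2^n+2^{-n})B_E$ exactly, since $E$ carries the subspace norm. Everything else is a direct computation.
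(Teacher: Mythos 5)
Your proof is correct and, after the abandoned first attempt, follows essentially the same route as the paper: the sandwich $(2^{n}+2^{-n})B_{E} \subset \overline{2^{n}W + 2^{-n}B_{X}} \subset (2^{n}+2^{-n})B_{X}$ forces $\Vert x \Vert_{n} = \Vert x \Vert/(2^{n}+2^{-n})$ on $E$, and summing gives $c = \big(\sum_{n}(2^{n}+2^{-n})^{-2}\big)^{1/2}$. The closure issue you flag is handled exactly as you describe.
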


\begin{proof}
For every $ n \in \mathbb{N} $, our assumption provides
$$ 2^{n}B_{E} + 2^{-n}B_{E} \subset 2^{n}W + 2^{-n}B_{X} \subset 2^{n}B_{X} + 2^{-n}B_{X}, $$
and so
$$ (2^{n}+2^{-n}) B_{E} \subset B_{(X, \Vert \cdot \Vert_{n})} \subset (2^{n} + 2^{-n}) B_{X}. $$
We obtain
$$ \Vert x \Vert_{n} = \frac{1}{2^{n}+2^{-n}} \Vert x \Vert, \quad x \in E, $$
and the existence of a suitable constant $ c $ follows.
\end{proof}

\begin{proof}[Proof of Theorem \ref{thmmain}]
By \cite[Theorem 1.2]{kurka}, the theorem has been already proven under the assumption that the members of $ \mathcal{C} $ have a shrinking monotone basis (considering part (1)) or a monotone basis (considering part (2)). Moreover, using the Lusin separation theorem (see e.g. \cite[(14.7)]{kechris}), since both the families of spaces with a separable dual and of reflexive spaces are coanalytic (see \cite[Corollary 3.3]{bossard2}), we may assume that $ \mathcal{C} $ is Borel. Therefore, it is sufficient to show the following.

(1) \emph{Let $ \mathcal{B} $ be a Borel set of Banach spaces with a separable dual. Then there exists an analytic set $ \mathcal{C} $ of Banach spaces with a shrinking monotone basis such that an isometric copy of every member of $ \mathcal{B} $ is contained in a member of $ \mathcal{C} $.}

(2) \emph{Let $ \mathcal{B} $ be a Borel set of separable reflexive Banach spaces. Then there exists an analytic set $ \mathcal{C} $ of reflexive Banach spaces with a monotone basis such that an isometric copy of every member of $ \mathcal{B} $ is contained in a member of $ \mathcal{C} $.}

We show that it is possible to use a method of Ghoussoub, Maurey and Schachermayer \cite{gms}, studied further in \cite{bossard1} and \cite[Chapter~5]{dodostopics}. Let $ \mathbf{1} $ denote the constant function on $ 2^{\mathbb{N}} $ equal to $ 1 $ and let $ g_{0} \in C(2^{\mathbb{N}}) $ be a fixed function that separates points of $ 2^{\mathbb{N}} $. For every subspace $ E \subset C(2^{\mathbb{N}}) $ containing $ \mathbf{1} $ and $ g_{0} $ and with $ E^{*} $ separable, it is possible to find a monotone basis $ e^{E}_{1}, e^{E}_{2}, \dots $ of $ C(2^{\mathbb{N}}) $ with the associated partial sum operators $ P^{E}_{1}, P^{E}_{2}, \dots $ such that, if we define $ W_{E} $ by
$$ W_{E} = \overline{\mathrm{co}} \, \bigcup_{k=1}^{\infty} P^{E}_{k}(B_{E}) $$
and $ Z(E) $ as the $ 2 $-interpolation space of the pair $ (C(2^{\mathbb{N}}), W_{E}) $, then the following properties are satisfied.

(i) $ Z(E) $ contains an isomorphic copy of $ E $ (see \cite{gms} or \cite[Theorem~5.17]{dodostopics}).

(ii) The sequence $ e^{E}_{1}, e^{E}_{2}, \dots $ is contained in $ Z(E) $ and forms a basis of $ Z(E) $ that is shrinking and monotone (see \cite{gms} or \cite[Theorem~5.17]{dodostopics}).

(iii) If $ E $ is reflexive, then $ Z(E) $ is reflexive (see \cite[Lemma~5.18]{dodostopics}).

(iv) If we denote
$$ E_{X} = \overline{\mathrm{span}} \, \big( X \cup \{ \mathbf{1}, g_{0} \} \big), $$
then, for every Borel set $ \mathcal{B} $ of Banach spaces with a separable dual, the set
$$ \mathcal{Z} = \Big\{ (X, Y) \in \mathcal{B} \times \mathcal{SE}(C(2^{\mathbb{N}})) : \textrm{$ Y $ is isometric to $ Z(E_{X}) $} \Big\} $$
is analytic (see \cite{bossard1} or \cite[Theorem~5.19]{dodostopics}).

Now, we realize that (i) holds in a stronger sense. Using Lemma~\ref{interpembed}, since $ B_{E} \subset W_{E} \subset B_{C(2^{\mathbb{N}})} $, we obtain that $ Z(E) $ contains an isometric copy of $ E $. Thus, $ Z(E_{X}) $ contains an isometric copy of $ X $.

Let $ \mathcal{B} $ be a Borel set of Banach spaces with a separable dual. To find a suitable $ \mathcal{C} $, we consider the projection of $ \mathcal{Z} $ on the second coordinate. By (iv), the set
$$ \mathcal{C} = \Big\{ Y \in \mathcal{SE}(C(2^{\mathbb{N}})) : \textrm{$ Y $ is isometric to $ Z(E_{X}) $ for some $ X \in \mathcal{B} $} \Big\} $$
is analytic. It follows from properties (ii), (iii) and the isometric version of (i) that $ \mathcal{C} $ works.
\end{proof}

\begin{proof}[Proof of Corollary \ref{corszlenk}]
It follows from \cite[Theorem~4.11]{bossard2} and \cite[Proposition~0.1(i)]{bossard2} that the set of spaces $ X \subset C(2^{\mathbb{N}}) $ with $ \mathrm{Sz}(X) \leq \xi $ is Borel. In particular, it is analytic, and Theorem~\ref{thmmain}(1) can be applied.
\end{proof}

\begin{proof}[Proof of Corollary \ref{corsuprefl}]
The set of uniformly convex subspaces of $ C(2^{\mathbb{N}}) $ is Borel (see \cite[Corollary~5]{dodosferenczi}). Since the relation of isomorphism is analytic in $ \mathcal{SE}(C(2^{\mathbb{N}}))^{2} $ (see \cite[Theorem~2.3(i)]{bossard2}), the set of subspaces of $ C(2^{\mathbb{N}}) $ which have an equivalent uniformly convex norm is analytic. This set coincides with the set of super-reflexive subspaces (see \cite[Corollary~3]{enflo}). Thus, Theorem~\ref{thmmain}(2) can be applied.
\end{proof}

\end{document}